\documentclass[11pt,reqno]{amsart}
\setlength{\textheight}{23cm}
\setlength{\textwidth}{16cm}
\setlength{\topmargin}{-0.8cm}
\setlength{\parskip}{0.3\baselineskip}
\hoffset=-1.4cm

\usepackage{diagrams}
\usepackage{amssymb, mathtools}
\usepackage[pagewise]{lineno}
\numberwithin{equation}{section}
\usepackage{cite}
\usepackage{url}
\usepackage{graphicx}

\theoremstyle{definition}
\newtheorem{thm}{Theorem}[section]
\newtheorem{cor}[thm]{Corollary}

\newtheorem{exa}[thm]{Example}

\DeclareMathOperator{\Hc}{\mathcal{H}om}

\DeclareMathOperator{\I}{\mathcal{I}}
\DeclareMathOperator{\mo}{\mathcal{O}}

\makeatletter
\@namedef{subjclassname@2020}{\textup{2020} Mathematics Subject Classification}
\makeatother

\newcommand{\mr}[1]{\mathrm{#1}}
\newcommand{\mb}[1]{\mathbb{#1}}

\newcommand{\mc}[1]{\mathcal{#1}}

\newcommand{\mf}[1]{\mathfrak{#1}}


\begin{document}

 \title{Examples of non-flat bundles of rank one}

\author{Ananyo Dan}

\address{School of Mathematics and Statistics, University of Sheffield, Hicks building, Hounsfield Road, S3 7RH, UK}

\email{a.dan@sheffield.ac.uk}

\author{Agust\'in Romano-Vel\'azquez}

\address{Alfréd Rényi Institute Of Mathematics, Hungarian Academy Of Sciences, Reáltanoda Utca 13-15, H-1053, Budapest, Hungary} 

\email{agustin@renyi.hu}

\thanks{}

\subjclass[2020]{14B05, 13C14}

\keywords{Flat connections, Brieskorn-Pham surfaces, Maximal Cohen-Macaulay modules, 
Isolated surface singularities}

\date{\today}

\begin{abstract}
 In this article we give several examples of line bundles on 
 certain non-compact surfaces that cannot be equipped with a flat 
 connection. 
\end{abstract}

 \maketitle
 
 \section{Introduction}
 Existence of connections on modules defined over isolated surface singularities 
 have been extensively studied (see \cite{erik1, erik2, gus1, erik3, blo}). However, in 
 these literatures the authors stress that they cannot produce a single
 example of a maximal Cohen-Macaulay (MCM) module over a surface singularity that 
 does not admit a flat connection (see \cite[p. $106$]{gus1}, \cite[p. $1562$]{erik1}, 
 \cite[p. $903$]{blo}), even with the help of a computer (see \cite[p. $322$]{erik2}).
 By flat connection, we mean a connection with zero curvature.
  In this article we produce numerous examples of maximal Cohen-Macaulay modules over 
 certain isolated surface singularities that cannot be equipped with a flat connection.
 In particular, we prove:
 \begin{thm}\label{thm}
 	Let $(X,x)$ be the germ of a normal surface singularity such that the fundamental group of 
 	the link is perfect. Suppose  that $(X,x)$ contains a smooth curve passing through
 	 $x$. Then, there exists a line bundle on $X\setminus \{x\}$ that cannot be equipped with a flat connection. Moreover, one can associate to any such smooth curve (i.e., 
 	passing through $x$), an unique (up to isomorphism) line bundle on $X\setminus \{x\}$ that cannot be equipped with a flat connection.
\end{thm}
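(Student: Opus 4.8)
The plan is to split the statement into two independent inputs and then combine them. Write $U := X\setminus\{x\}$; since $(X,x)$ is a normal surface singularity, $U$ is a smooth (non-compact) complex surface which deformation retracts onto the link $L$, so $\pi_1(U)\cong\pi_1(L)$ is perfect. I would first show that on such a $U$ there are \emph{no} nontrivial flat line bundles, and then produce, from the smooth curve $C$, a line bundle that is nontrivial in $\Pic(U)$; the two facts together force it to carry no flat connection.

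For the first input I would invoke the Atiyah--Weil criterion: a holomorphic line bundle $L$ on a complex manifold $U$ admits a flat (holomorphic) connection if and only if its class lies in the image of the natural map $H^1(U,\underline{\mathbb{C}}^{*})\to H^1(U,\mathcal{O}_U^{*})=\Pic(U)$, where $\underline{\mathbb{C}}^{*}$ is the locally constant sheaf. By universal coefficients $H^1(U,\underline{\mathbb{C}}^{*})\cong \Hom(H_1(U,\mathbb{Z}),\mathbb{C}^{*})$, and since $\pi_1(U)$ is perfect we have $H_1(U,\mathbb{Z})=\pi_1(U)^{\mathrm{ab}}=0$. Hence this image is trivial, and a line bundle on $U$ admits a flat connection if and only if it is isomorphic to $\mathcal{O}_U$. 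Equivalently, a flat line bundle is a rank-one local system, i.e.\ a character $\pi_1(U)\to\mathbb{C}^{*}$, and perfectness leaves only the trivial character, whose associated bundle is $\mathcal{O}_U$.

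For the second input, set $A:=\mathcal{O}_{X,x}$ and $L_C:=\mathcal{O}_U(C\cap U)$, the line bundle of the smooth (hence Cartier on the smooth $U$) divisor $C\cap U$. I would identify $\Pic(U)\cong \mathrm{Cl}(A)$ --- removing the codimension-two point $x$ leaves the class group unchanged and $U$ is smooth, so $\Pic(U)=\mathrm{Cl}(U)=\mathrm{Cl}(A)$ --- under which $[L_C]$ corresponds to the Weil divisor class $[C]$. It then suffices to show $[C]\neq 0$. If $[C]=0$ then the height-one prime $I_{C,x}\subset A$ is principal, say $I_{C,x}=(f)$, so $\mathcal{O}_{C,x}=A/(f)$. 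But $C$ is smooth, so $\mathcal{O}_{C,x}$ is a one-dimensional regular local ring, and $f$ is a nonzerodivisor in the domain $A$; lifting a generator of the maximal ideal of $A/(f)$ then exhibits $\mathfrak{m}_A$ as generated by two elements, forcing $A$ to be regular. This contradicts $(X,x)$ being singular. When $x$ is a smooth point one has instead $\Pic(U)=0$ and $L_C$ is trivial, consistent with $\pi_1(L)=1$; the content is entirely in the singular case. Therefore $L_C\not\cong\mathcal{O}_U$, and by the first input $L_C$ admits no flat connection, which proves existence.

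Finally, the assignment $C\mapsto \mathcal{O}_U(C\cap U)$ depends only on $C$ and produces a line bundle determined up to isomorphism, which gives the ``moreover'' clause; its class is the nonzero element $[C]\in\mathrm{Cl}(A)$, and by the first paragraph any nontrivial class is non-flat, so this associated bundle is the required one. I expect the main obstacle to be expository rather than deep: one must pin down that the paper's notion of flat connection matches the local-system/$H^1(\underline{\mathbb{C}}^{*})$ description and that the Atiyah--Weil criterion is available on the non-compact surface $U$. Once that identification is secured, the only genuinely geometric ingredient --- that a smooth curve through a singular point of a normal surface is never Cartier there --- is the short regularity argument above.
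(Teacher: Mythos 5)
Your proposal is correct and follows essentially the same route as the paper: perfectness of $\pi_1(L)$ kills all flat line bundles on $U$ via the local-system description, and the smooth curve yields a non-trivial line bundle because a smooth curve through a singular point of a normal surface cannot be Cartier, proved by the identical two-generator argument showing $\mathfrak{m}_A=(f,c)$ would force regularity. The only differences are in packaging --- you phrase the first step via the Atiyah--Weil map $H^1(U,\underline{\mathbb{C}}^{*})\to\Pic(U)$ rather than the Riemann--Hilbert correspondence, and you realize the bundle as $\mathcal{O}_U(C\cap U)$ via $\Pic(U)\cong\mathrm{Cl}(A)$ rather than as the restriction of $\mathcal{H}om_X(\mathcal{I}_C,\mathcal{O}_X)$ --- but these describe the same object and the same obstruction.
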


As a corollary we produce explicit examples of maximal Cohen-Macaulay
modules that cannot be equipped with a flat connection.

 \begin{cor}\label{cor}
Let $(p,q,r)$ be a triple of positive integers that are pairwise coprime and $r>pq$. Denote by $G(p,q,r)$ the surface in $\mb{C}^3$ defined by the polynomial $X^p+Y^q+Z^r$ and by $U(p,q,r)$ the regular locus of $G(p,q,r)$. Then, 
 there exists line bundles on $U(p,q,r)$ that cannot be equipped with a flat connections.
 \end{cor}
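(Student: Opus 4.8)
The plan is to realize $(X,x)=(G(p,q,r),0)$ as a germ satisfying all hypotheses of Theorem~\ref{thm} and then simply invoke that theorem; the work is entirely in checking the three conditions (normal surface singularity, perfect fundamental group of the link, existence of a smooth curve through $x$). Throughout I assume $p,q,r\ge 2$, since otherwise $F=X^p+Y^q+Z^r$ is smooth and there is no singular point to speak of. First I would record that the gradient $(pX^{p-1},qY^{q-1},rZ^{r-1})$ vanishes only at the origin, so $G(p,q,r)$ has an isolated singularity there and its regular locus is exactly $U(p,q,r)=G(p,q,r)\setminus\{0\}$; as a germ this is $X\setminus\{x\}$. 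Being a hypersurface in $\mathbb{C}^3$, $G(p,q,r)$ is Cohen--Macaulay, hence $S_2$, while the isolated singularity gives $R_1$, so by Serre's criterion $(X,x)$ is a germ of normal surface singularity.

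Next I would treat the fundamental group. The link of $(X,x)$ is the Brieskorn manifold $\Sigma(p,q,r)$, and since $p,q,r$ are pairwise coprime this is an integral homology $3$-sphere by the classical computation of the topology of Brieskorn links (Brieskorn, Milnor). Consequently $H_1(\Sigma(p,q,r);\mathbb{Z})=0$; as $H_1$ is the abelianization of $\pi_1$, the fundamental group of the link equals its own commutator subgroup, i.e.\ it is perfect. Note that this step uses only pairwise coprimality, not the inequality $r>pq$.

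The heart of the argument, and the place where $r>pq$ enters, is the construction of a smooth curve through $x$. Taking $Z$ as a local parameter I would set $z=t$ and seek analytic $x(t),y(t)$ vanishing at $0$ with $x(t)^p+y(t)^q+t^r=0$. Put $N=r-pq\ge 1$, fix any $\mu\ne 0$, set $y(t)=\mu t^p$, and define $x(t)=t^q\,(-\mu^q-t^N)^{1/p}$ using the analytic branch of the $p$-th root near $-\mu^q\ne 0$; this $x(t)$ is analytic with $x(0)=0$ and order exactly $q\ge 2$. A direct check gives $x^p+y^q+z^r=t^{pq}(-\mu^q-t^N)+\mu^q t^{pq}+t^r=0$, so the immersion $t\mapsto(x(t),y(t),t)$ (its $z$-derivative is $1$) has image a smooth curve germ on $G(p,q,r)$ through the origin. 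I would also remark that $r>pq$ is not merely convenient but necessary: since $r>pq\ge 2\max\{p,q\}>\max\{p,q\}$, the exponent $r$ is largest, and a short order-of-vanishing analysis shows that any smooth curve on the surface must be tangent to the $Z$-axis while the two lower monomials can cancel only at order $pq$; consistency with the $t^r$ term then forces $pq<r$.

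Having verified that $(G(p,q,r),0)$ is a normal surface singularity whose link has perfect fundamental group and which contains a smooth curve through the singular point, I would conclude by applying Theorem~\ref{thm}: it produces a line bundle on $X\setminus\{x\}=U(p,q,r)$ admitting no flat connection. The main obstacle is the smooth-curve step, i.e.\ exhibiting the explicit parametrization above and recognizing that $r>pq$ is precisely the condition for such a curve to exist; by contrast the perfectness of $\pi_1$ rests on the classical homology-sphere property of pairwise-coprime Brieskorn links, and the normality is routine.
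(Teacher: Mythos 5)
Your proposal is correct and follows the same overall strategy as the paper: verify the hypotheses of Theorem~\ref{thm} for $(G(p,q,r),0)$ and invoke that theorem. The perfectness step is identical in substance (the paper cites N\'emethi for the integral-homology-sphere property of the link of a pairwise-coprime Brieskorn singularity; you appeal to the same classical fact), and your explicit verification of normality, while absent from the paper's three-line proof, is routine and consistent with it. The one genuinely different ingredient is the smooth-curve step: the paper simply quotes Theorem~\ref{thm:exist} (Jiang--Oka--Duc--Siersma), whose condition (2) reads $r>pq/\mathrm{gcd}(p,q)=pq$ under pairwise coprimality, whereas you construct the curve explicitly as $t\mapsto\bigl(t^{q}(-\mu^{q}-t^{r-pq})^{1/p},\,\mu t^{p},\,t\bigr)$. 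Your parametrization checks out: the three terms sum to $-t^{pq}(\mu^{q}+t^{r-pq})+\mu^{q}t^{pq}+t^{r}=0$, the map is an immersion with injective germ since its $z$-component is $t$, and analyticity of the $p$-th root needs only $\mu\neq0$. This makes your proof self-contained where the paper's relies on a citation, and it shows exactly where the hypothesis $r>pq$ enters; what the citation buys in exchange is the sharp ``if and only if'' characterization, which you only sketch in your necessity remark (that remark is not needed for the corollary and could be omitted or else fleshed out). Your caveat that one should assume $p,q,r\ge 2$ so that the origin is actually singular is a reasonable reading of the statement that the paper leaves implicit.
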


The study of the obstruction to the existence of flat connection on MCM modules 
 has applications in Lie-Rinehart cohomology (see \cite{erik3}) and Chern-Simmons theory (see \cite{blo}).

 {\bf{Acknowledgement}} We thank Prof. Javier F. de Bobadilla and Prof. Duco van Straten for 
their interest in this problem and helpful comments. The first author is funded by 
EPSRC grant number R/$162871-11-1$. The second author is funded by OTKA 126683 and Lend\"ulet 30001. The second author thanks CIRM, Luminy, for its hospitality and for providing a perfect work environment. He also thanks Prof. Javier F. de Bobadilla, the 2021 semester 2 Jean-Morlet Chair, for the invitation.

 \section{Example of non-flat invertible sheaves}
 We give examples of rank $1$ invertible sheaves which cannot be equipped with a flat connection.

 \subsection{Brieskorn-Pham surfaces}
 Given positive integers $(p,q,r)$, denote by $G(p,q,r) \subset \mb{C}^3$
  the zero locus of the 
 polynomial $X^p+Y^q+Z^r$. The resulting surface is the
 \emph{Brieskorn-Pham type surface}. The origin $0$ is the only singularity of the 
 surface.
 Denote by 
 \[\mc{S}:=\{C \subset G(p,q,r)\, |\, C \mbox{ is a non-singular curve passing through the origin }0\}.\]
  The following theorem proves the 
 existence of smooth curves through the singularity of $G(p,q,r)$.
  \begin{thm}\label{thm:exist}
  Assume $(p,q,r)=1$ and $p \le q \le r$.  Then, $\mc{S} \not= \emptyset$ if and 
  only if at least one of the following conditions hold:
  \begin{enumerate}
   \item two of the three integers $(p,q,r)$ are coprime and the other one 
   is divisible by at least one of the two coprime numbers,
   \item the inequality $r>pq/\mr{gcd}(p,q)$ holds.
  \end{enumerate}
 \end{thm}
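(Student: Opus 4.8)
The plan is to reduce the existence of an element of $\mathcal{S}$ to a statement about power-series parametrizations. A smooth curve germ $C$ through the origin is the image of an immersion $t \mapsto (x(t),y(t),z(t))$ with $x(0)=y(0)=z(0)=0$ and nonvanishing derivative at $0$; writing $\alpha,\beta,\gamma$ for the orders of vanishing of $x,y,z$ at $t=0$ (with $\mathrm{ord}=\infty$ for a coordinate identically zero), smoothness is equivalent to $\min(\alpha,\beta,\gamma)=1$, while $C\subset G(p,q,r)$ means $x^p+y^q+z^r\equiv 0$. The elementary observation driving everything is that this identity forces the minimum of the three term-orders $\{p\alpha,\,q\beta,\,r\gamma\}$ to be attained at least twice, since a single strictly lowest term can never cancel. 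I would first record the arithmetic reformulation that, under $\gcd(p,q,r)=1$ and $p\le q\le r$, condition $(1)$ is equivalent to the divisibility statement ``$p\mid q$ or $p\mid r$ or $q\mid r$'': any one such divisibility forces two of the three integers to be coprime, because a common factor of the other pair would divide all three and hence divide $1$; conversely condition $(1)$ supplies exactly one of these divisibilities.

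For sufficiency I would argue by explicit construction. If condition $(1)$ holds, say $p\mid r$ with $r=ps$, then setting $y=0$ restricts the surface to the plane binomial $X^p+Z^r = \prod_{\zeta^p=-1}\bigl(X-\zeta Z^s\bigr)$; any single factor $X=\zeta Z^s$ is a smooth graph over the $Z$-axis, hence a smooth branch lying on $G(p,q,r)$. The cases $p\mid q$ and $q\mid r$ are identical after permuting which coordinate is set to zero. If instead condition $(2)$ holds, $r>pq/g$ with $g=\gcd(p,q)$, I would fix $x(t)=t^{q/g}$ and $z(t)=t$ and solve $y^q=-(x^p+z^r)=-t^{pq/g}\bigl(1+t^{\,r-pq/g}\bigr)$; since $p/g\in\mathbb{Z}$ and $r-pq/g>0$, the binomial series produces an honest power series $y(t)=\omega\,t^{p/g}\bigl(1+t^{\,r-pq/g}\bigr)^{1/q}$ with $\omega^q=-1$. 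The resulting germ lies on the surface by construction and is smooth because $z(t)=t$ has order one.

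The substantial part is the converse: assuming neither condition holds, so $p\nmid q$, $p\nmid r$, $q\nmid r$ and $r\le pq/g$, I must show $\mathcal{S}=\emptyset$. Here I would run the case analysis forced by the ``minimum attained twice'' principle. If exactly one coordinate vanishes identically the curve lies in a coordinate plane and is a smooth branch of some binomial $A^i+B^j$, which exists only when $i\mid j$ or $j\mid i$ — precisely one of the excluded divisibilities. Otherwise let $L=\min(p\alpha,q\beta,r\gamma)$ be attained by a pair (or triple) of coordinates. For an active pair $\{x,y\}$ one has $p\alpha=q\beta=L$, so $L$ is a common multiple of $p,q$, giving $L\ge\mathrm{lcm}(p,q)=pq/g$, $\alpha=L/p\ge q/g$ and $\beta=L/q\ge p/g$; then imposing $\min(\alpha,\beta,\gamma)=1$ forces either a divisibility among $p,q,r$ or the strict inequality $r>pq/g$, each contradicting the standing hypotheses. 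The analogous computations for the active pairs $\{x,z\}$ and $\{y,z\}$ and for the triple case are handled identically, repeatedly using $p\le q\le r$ together with the bound $\mathrm{lcm}\ge\max$ of a pair.

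The one genuinely delicate point — and the main obstacle — is the boundary value $r=pq/g$, where the pairwise lower bounds are met with equality and the argument above does not immediately close. The resolution I would use is that $r=pq/g$ together with $\gcd(p,q,r)=1$ forces $g=1$: indeed $\gcd(p,q,r)=\gcd(g,r)=g$ since $g\mid r$. Hence $r=pq$, so $p\mid r$ and condition $(1)$ holds after all. This single observation explains simultaneously why the inequality in condition $(2)$ must be strict and why the boundary case cannot provide an extra smooth curve in the necessity argument, and I expect it to be the crux that makes the two conditions together exactly characterize $\mathcal{S}\neq\emptyset$.
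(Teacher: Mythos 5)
Your proof is correct, but it takes a genuinely different route from the paper: the paper does not prove this statement at all --- its entire proof is the citation ``See \cite[Theorem 3]{jiang}''. You instead give a self-contained elementary argument, and its key ingredients all check out: the observation that $x^p+y^q+z^r\equiv 0$ along a parametrized branch forces the minimum of the term-orders $p\alpha, q\beta, r\gamma$ to be attained at least twice (a single strictly lowest leading term, being a power of a nonzero coefficient, cannot cancel); the reformulation of condition $(1)$ as ``$p\mid q$ or $p\mid r$ or $q\mid r$'' under $\gcd(p,q,r)=1$ and $p\le q\le r$; the explicit smooth branches $X=\zeta Z^{r/p}$ in a coordinate plane, respectively the parametrization $y=\omega t^{p/g}(1+t^{r-pq/g})^{1/q}$ via the binomial series, for sufficiency; and, crucially, the boundary observation that $r=pq/g$ forces $g=\gcd(g,r)=\gcd(p,q,r)=1$, hence $r=pq$ and $p\mid r$, which is exactly why the strict inequality in $(2)$ is the correct threshold. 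What your route buys is a proof the paper could include without importing \cite{jiang}; what it still owes the reader is the remaining cases of the necessity direction, which are not literally ``identical'' to the pair $\{x,y\}$: for the active pair $\{x,z\}$ one rules out $\beta=1$ via $q= q\beta\ge L\ge \mathrm{lcm}(p,r)\ge r\ge q$ (forcing $q=r$ and $p\mid r$), and for $\{y,z\}$ one rules out $\alpha=1$ via $p\ge \mathrm{lcm}(q,r)\ge r\ge p$ (forcing $p=q=r$, hence $p=1$ by $\gcd(p,q,r)=1$, contradicting $p\nmid q$); each case closes, but only after using $p\le q\le r$ and the gcd hypothesis in a slightly different way, so they should be written out. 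One further point to make explicit: the construction under condition $(2)$ produces a convergent-power-series germ rather than a global algebraic curve; this suffices for the application in Theorem \ref{thm}, which only needs a smooth curve germ through the singular point, but if $\mathcal{S}$ is meant to consist of global curves in $G(p,q,r)$ (as in the ``lines'' of \cite{jiang}) a word of justification is needed there.
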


 \begin{proof}
  See \cite[Theorem $3$]{jiang}.
 \end{proof}

  \subsection{Proof of Theorem \ref{thm}} 
   Denote by $U$ the regular locus of $(X,x)$, i.e. $U= X\setminus \{x\}$. Note that the fundamental group $\pi_1(U)$ is the same as the fundamental group of the link $L$ of $(X,x)$. By hypothesis, the fundamental group of $L$ is perfect. This means that the abelianization $\pi_1(U)^{\mr{ab}}$ of 
   the fundamental group $\pi_1(U)$ is trivial.  Since $\mr{GL}_1(\mb{C})=\mb{C}^*$ is abelian, any $1$-dimensional
   group representation of $\pi_1(U)$ factors through  $\pi_1(U)^{\mr{ab}}$,
   which is trivial. Therefore, every $1$-dimensional representation of 
$\pi_1(U)$ is trivial. By the Riemann-Hilbert correspondence, this implies
that there does not exist any non-trivial line bundle on $U$ 
that can be equipped with a flat connection. Therefore, it suffices to show the 
existence of non-trivial line bundles on $U$.

By hypothesis, there exists  $C \subset X$ a smooth curve contained in the surface $X$ passing through the singular point. Denote by $\I_C$ the ideal sheaf of $C$. We now show that the restriction $M_U$ of $M:=\Hc_X(\I_C,\mo_X)$ to 
$U$ is a non-trivial line bundle.
Indeed, $M_U$ is trivial if and only if $i_*(M_U) \cong M$ is trivial (invertible 
sheaves are reflexive, thereby determined uniquely by its 
restriction to the open subset $U$), where $i: U \to X$ is the 
natural inclusion. Furthermore, since $M$ is a reflexive module, 
$M$ is trivial if and only if $M^{\vee}$ is trivial (double dual 
of a reflexive module is isomorphic to itself).
Consider the short exact sequence:
\begin{equation}\label{eq01}
 0 \to \I_C \to \mo_X \to \mo_C \to 0
\end{equation}
By the depth comparison in exact sequences, we have that 
$\I_C$ is a reflexive $\mo_X$-module. This implies that 
\[M^{\vee} \cong \I_C^{\vee \vee} \cong \I_C.\]
This implies, $M_U$ is trivial if and only if $\I_C$ is trivial.
By \eqref{eq01}, $\I_C$ is trivial if and only if $C$ is a Cartier divisor.
So, it suffices to show that $C$ is a Weil divisor which is not Cartier.

We prove this by contradiction. In particular, suppose that $C$ is 
a Cartier divisor. We are going to give a contradiction to the non-smoothness 
of $X$. 
Let $f \in \mo_X$ be a function defining $C$ (the existence of $f$ is guaranteed as 
$C$ is Cartier) and $\mf{m}$ (resp. $\mf{m}'$) the maximal ideal of $\mo_X$ (resp. $\mo_X/(f)$).
We then have the following short exact sequence of $\mo_X$-modules:
\begin{equation}\label{eq03}
	0 \to \mf{m} \cap (f) \to \mf{m} \to \mf{m}' \to 0.
\end{equation}
Since $C$ is a smooth curve, the dimension as a complex vector space of $\mf{m}'/(\mf{m}')^2$ is one. 
Let $c' \in \mf{m}'$ be a generator of $\mf{m}'/(\mf{m}')^2$.  By the surjectivity of 
\eqref{eq03}, there exists an element $c \in \mf{m}$ that is mapped onto $c'$. 
We will now show that $\mf{m}/(\mf{m})^2$ is generated as a $\mb{C}$-vector space by $f$ and $c$.
Indeed, let $g$ be any element in $\mf{m}$. If $g$ maps to $0$ in $\mf{m}'$ then by the exact sequence 
\eqref{eq03}, $g=\beta f$ for some $\beta \in \mo_X$. If $g$ maps to a non-zero element in $\mf{m}'$,
then there exists $\alpha \in \mo_X$ such that $g-\alpha c$ is mapped to zero in $\mf{m}'$.
Hence by the exactness of \eqref{eq03}, there exists $\beta \in \mo_X$ such that $\beta f=g -\alpha c$.  
Therefore, $g$ is a $\mo_X$-linear combination of $f$ and $c$ in both cases. 
Since $g$ was arbitrary, this implies that 
the dimension as a complex vector space of  $\mf{m}/(\mf{m})^2$ is two (generated by $f$ and $c$). 
But this will mean $X$ is smooth at the origin, which is a contradiction.
Hence, $C$ cannot by a Cartier divisor. This proves the theorem.\qed

\begin{proof}[Proof of Corollary \ref{cor}]
 	Let $(p,q,r)$ be a triple of positive integers that are pairwise coprime. By 
 	\cite[p.~7]{N99}, the link of $(G(p,q,r),0)$ is an integer homology sphere.
 	Hence, the fundamental group of the link is perfect.
 	By Theorem \ref{thm:exist} there exists a smooth curve in $G(p,q,r)$ passing 
 	through the origin. The corollary then follows from Theorem \ref{thm}.
\end{proof}

\begin{exa}
 Let $f= x^2 + y^3 + z^7$. By Corollary \ref{cor}, 
 the hypersurface in $\mb{C}^3$ defined by $f$ admits MCM modules of rank one that cannot be 
 equipped with a flat connection.
\end{exa}

 \end{document}